\documentclass[11pt]{amsart}
\usepackage{amsmath,amsthm,amsfonts,amssymb,MnSymbol}

\newcommand{\C}{\mathbb{C}}
\newcommand{\Z}{\mathbb{Z}}
\newcommand{\R}{\mathbb{R}}
\newcommand{\Q}{\mathbb{Q}}
\newcommand{\Qb}{\bar{\mathbb{Q}}}
\newcommand{\F}{\mathbb{F}}
\newcommand{\A}{\mathbb{A}}
\newcommand{\N}{\mathbb{N}}
\newcommand{\PP}{\mathbb{P}}

\newcommand{\X}{\mathfrak{X}}

\newcommand{\Gal}{\operatorname{Gal}}

\newcommand{\pre}{\operatorname{pre}}


\newtheorem{thm}{Theorem}
\newtheorem{quest}[thm]{Question}

\newtheorem{cor}[thm]{Corollary}

\newtheorem{prop}[thm]{Proposition}

\newtheorem{lem}[thm]{Lemma}

\newtheorem*{claim*}{Claim}
\begin{document}

\title[Finitary Hasse Principle]{A Finitary Hasse Principle for Diagonal Curves}
\author{Jean Bourgain and Michael Larsen}
\date{\today}
\address{School of Mathematics,
Institute for Advanced Study,
Einstein Drive, Princeton, NJ 08540, USA}
\email{bourgain@math.ias.edu}
  \address{Department of Mathematics, Indiana University, Bloomington,
Indiana 47405, USA} \email{mjlarsen@indiana.edu}

\thanks{Jean Bourgain thanks the Berkeley math department for its hospitality.
Michael Larsen thanks the MSRI for its hospitality and  also  wants to acknowledge support from  NSF grant DMS-1101424
and the Simons Foundation.}

\maketitle
\begin{abstract}
We prove a Hasse principle for solving equations of the form $ax+by+cz=0$ where $x,y,z$ belong to a given finite index subgroup of $\Q^\times$.
From this we deduce a Hasse principle for diagonal curves over subfields of $\bar\Q$ with finitely generated Galois group.
\end{abstract}

\section{Introduction}

Let $a,b,c$ be non-zero rational numbers and $n\ge 2$ an integer.  Let $X$ denote the projective curve
$ax^n+by^n+cz^n=0$.  For $n=2$, the following are equivalent:

\begin{enumerate}
\item $X(\Q_p)\neq \emptyset$ for all $p$ and $X(\R)\neq\emptyset$.
\item $X(\Q)\neq\emptyset$.
\item $X(\Q)$ is infinite.
\end{enumerate}

The equivalence of (1) and (2) is the Hasse-Minkowski theorem for conics over $\Q$, while the equivalence of (2) and (3) follows from stereographic projection.  For $n>2$, neither equivalence holds
in general.  Already for $n=3$, the Tate-Shafarevich group gives an obstruction to (1)$\Rightarrow$(2); for instance, Selmer showed that
$3x^3+4y^3+5z^3=0$ has local solutions for all places of $\Q$ but no global solution \cite[p.~8]{K}.
For $a=b=-c=1$, Fermat's Last Theorem shows that (2) does not imply (3) for any $n\ge 3$.

We fix once and for all an algebraic closure $\Qb$ of $\Q$.
We can view elements of $X(\Q)$ as elements of $X(\Qb)$ which are invariant under the action of
$G_{\Q} := \Gal(\Qb/\Q)$.  As $G_{\Q}$ is not finitely generated, this can be regarded as an infinitary condition.  It turns out that if we replace invariance under $G_{\Q}$ by any finite collection of invariance conditions, the equivalence of conditions (1)--(3) as above holds for all $n$ and all $a,b,c$.

Let $\Sigma\subset G_{\Q}$ be any finite subset.  Let 
$$K_\Sigma := \{x\in \Qb\mid \sigma(x)=x\ \forall \sigma\in \Sigma\}$$ 
denote the field of  invariants of the closed subgroup $\langle\Sigma\rangle$ generated by $\Sigma$.
A subfield $K$ of $\Qb$ is of this form if and only its absolute Galois group $G_K$ is (topologically) finitely generated.  We prove the following theorem:

\begin{thm}
\label{Hasse-Minkowski}
Given $a,b,c\in \Q^\times$ and $n$ a positive integer, the following conditions on the projective curve $X: ax^n+by^n+cz^n=0$ are equivalent:
\begin{enumerate}
\item $X(\Q_p)\neq \emptyset$ for all $p$ and $X(\R)\neq\emptyset$.
\item $X(K)\neq\emptyset$ for all $K\subset \Qb$ with $G_K$ finitely generated.
\item $|X(K)|=\infty$ for all $K\subset \Qb$ with $G_K$ finitely generated.
\end{enumerate}
\end{thm}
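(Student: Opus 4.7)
The implication (3) $\Rightarrow$ (2) is immediate. For (2) $\Rightarrow$ (1), I would argue that at each place $v$ of $\Q$, the decomposition subgroup $D_v \subset G_\Q$ at a chosen prime of $\bar\Q$ above $v$ is topologically finitely generated: this is immediate at $v=\infty$ where $D_\infty$ has order at most $2$, and at a finite place $v=p$ it follows from the fact that $G_{\Q_p}$ is topologically finitely generated. Setting $K_v := \bar\Q^{D_v}$ gives a subfield of $\bar\Q$ with $G_{K_v} = D_v$ finitely generated and $K_v \hookrightarrow \Q_v$ (via the chosen embedding $\bar\Q \hookrightarrow \bar\Q_v$). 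Hence (2) gives $X(K_v) \neq \emptyset$ and therefore $X(\Q_v) \neq \emptyset$.

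The substantive implication is (1) $\Rightarrow$ (3), which should reduce to the other main theorem advertised in the abstract, the Hasse principle for $ax+by+cz=0$ with $x,y,z$ in a finite-index subgroup of $\Q^\times$. Fix $K \subset \bar\Q$ with $G_K$ topologically finitely generated, and set $H := \Q^\times \cap (K^\times)^n$. The first key point is that $[\Q^\times : H] < \infty$, which reduces to showing $K^\times/(K^\times)^n$ is finite. Via the Kummer sequence and Hilbert 90, $K^\times/(K^\times)^n \cong H^1(G_K, \mu_n)$, and the continuous cohomology of a topologically finitely generated profinite group with values in a finite module is finite, since cocycles are determined by their values on finitely many topological generators.

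To apply the paper's $\Q^\times$ Hasse principle to $H$, one must verify local solvability of $au+bv+cw=0$ in the closure $\bar H_v$ of $H$ inside $\Q_v^\times$ for every place $v$ of $\Q$. Since $H \supseteq (\Q^\times)^n$ and $\Q^\times$ is dense in $\Q_v^\times$, we have $\bar H_v \supseteq (\Q_v^\times)^n$ for every $v$. A local point $(x_v,y_v,z_v) \in X(\Q_v)$ therefore yields $(x_v^n,y_v^n,z_v^n) \in \bar H_v^3$ solving the linear equation, furnishing the required local hypothesis. The main Hasse principle then provides infinitely many projective triples $(u,v,w) \in H^3$ with $au+bv+cw=0$. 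Each such triple, having coordinates in $(K^\times)^n$, lifts to a point $[u_0 : v_0 : w_0] \in X(K)$ by choosing $n$-th roots $u_0,v_0,w_0 \in K^\times$. The map from projective triples to $K$-points is finite-to-one, with fiber size bounded by $|\mu_n(K)|^2 \leq n^2$, so infinitely many triples in $H^3$ produce infinitely many $K$-points on $X$.

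The main obstacle, of course, is the $\Q^\times$ Hasse principle itself, which is the genuinely new analytic content of the paper; once that is in hand, the deduction of Theorem \ref{Hasse-Minkowski} is fairly formal, with Kummer theory providing the correct finite-index subgroup $H$ and density of $\Q^\times$ in each $\Q_v^\times$ translating the local hypotheses from $X$ to the linear equation.
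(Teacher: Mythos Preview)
Your argument for (2)$\Rightarrow$(1) matches the paper's. There are, however, two slips in your (1)$\Rightarrow$(3) route.

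First, you state that to apply the $\Q^\times$ Hasse principle one must check solvability in the closure $\bar H_v$ of $H$ in $\Q_v^\times$ for each single place $v$. That is not the hypothesis of Theorem~\ref{Linear}: one must check solvability in the closure $H_S$ of $H$ in $\Q_S^\times=\prod_{v\in S}\Q_v^\times$ for every \emph{finite set} $S$ of places, and the paper exhibits (Proposition~\ref{Doubles}) an explicit example where singleton checks pass but the two-place check fails. Your argument is easily repaired: weak approximation gives $H_S\supseteq\prod_{v\in S}(\Q_v^\times)^n$, and then $(x_v^n,y_v^n,z_v^n)_{v\in S}$ is the required element of $H_S$.

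Second, and more substantively, you assert that ``the main Hasse principle then provides infinitely many projective triples $(u,v,w)\in H^3$''. The statement of Theorem~\ref{Linear} yields only \emph{one} solution; nothing in its conclusion gives infinitely many. The paper does not extract infinitude from Theorem~\ref{Linear} at all: it proves (1)$\Rightarrow$(2) via Theorem~\ref{Linear} and then invokes the separate Theorem~\ref{Infinity-Zero} (a Hales--Jewett argument) for (2)$\Rightarrow$(3). If you wish to go directly from (1) to (3) as you propose, you must either appeal to the quantitative output of the circle-method proof of Theorem~\ref{Linear} (it actually produces $\gg N^2$ affine solutions in $[1,N]^3$, hence infinitely many projective ones), or else supply a separate argument that one $K$-point on $X$ forces infinitely many---which is precisely Theorem~\ref{Infinity-Zero}.
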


One can prove that (2) implies (3) in greater generality:

\begin{thm}
\label{Infinity-Zero}
If $K$ is a field in characteristic zero such that $G_K$ is finitely generated, then $X(K)$ non-empty implies $|X(K)|=\infty$.
\end{thm}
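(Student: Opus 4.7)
The cases $n=1$ and $n=2$ are classical: for $n=1$ the curve is a line, so $X(K)=\PP^1(K)$ is automatically infinite; for $n=2$, $X$ is a smooth conic, and stereographic projection from any $P_0\in X(K)$ is a $K$-isomorphism $X\cong\PP^1_K$, giving $|X(K)|=\infty$. Assume henceforth $n\ge 3$.

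The plan is to reduce to Theorem~\ref{Hasse-Minkowski} by descending from $K$ to $L:=K\cap\Qb$. The algebraic closure $\Qb$ sits canonically inside $\bar K$ as the algebraic closure of $\Q$, and is therefore $G_K$-stable. Restriction yields a continuous homomorphism $G_K\to G_\Q$ whose image is the closed subgroup of $G_\Q$ whose fixed field in $\Qb$ is $K\cap\Qb=L$, namely $G_L$. As a continuous quotient of a topologically finitely generated group, $G_L$ is topologically finitely generated, so Theorem~\ref{Hasse-Minkowski} applies to $L$. It will therefore suffice to establish $X(L)\ne\emptyset$: the implication $(2)\Rightarrow(3)$ of Theorem~\ref{Hasse-Minkowski} then forces $|X(L)|=\infty$, whence $|X(K)|\ge|X(L)|=\infty$.

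To produce a point in $X(L)$, fix $P_0\in X(K)$ and regard it as a morphism $\varphi\colon\Spec K\to X$. The scheme-theoretic image of $\varphi$ is either a closed point of $X$ or the generic point. In the closed-point case, the residue field is a number field contained in $K$, hence in $K\cap\Qb=L$, so $P_0\in X(L)$ directly. In the generic-point case, $\varphi$ induces an embedding of the function field $\Q(X)\hookrightarrow K$, and I specialize: choose a valuation $v$ on $K$ that is trivial on $L$ (such valuations exist since $K/L$ has positive transcendence degree), and apply the valuative criterion of properness to the projective $L$-variety $X_L:=X\otimes_\Q L$ to extend $\varphi$ uniquely to a morphism $\Spec\mathcal O_v\to X_L$. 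The closed fibre is a point of $X$ defined over the residue field $k_v$, which is an algebraic extension of $L$ sitting inside $\Qb$. Because $L$ is algebraically closed in $K$, one can refine $v$ (or iterate specializations) so that $k_v=L$, producing the desired point in $X(L)$.

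\emph{Main obstacle.} The delicate step is the generic-point case, where one must control the residue field of the valuation: a naive choice of $v$ yields $k_v$ a proper algebraic extension of $L$, giving only a point in $X(\Qb)$ rather than $X(L)$. The technical heart of the argument is to combine the algebraic closedness of $L$ in $K$ with the hypothesis that $G_K$ is topologically finitely generated (so that $G_L$ and the Galois groups of intermediate algebraic extensions are too) to select a place whose residue field descends all the way to $L$, equivalently to perform a $G_L$-invariant descent from $X(\Qb)$ down to $X(L)$.
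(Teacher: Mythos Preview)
Your reduction to Theorem~\ref{Hasse-Minkowski} does not work, for two independent reasons.

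First, you have misread Theorem~\ref{Hasse-Minkowski}. Conditions (2) and (3) there are each universally quantified over \emph{all} subfields $K\subset\Qb$ with $G_K$ finitely generated; the implication $(2)\Rightarrow(3)$ says: if $X$ has a point over \emph{every} such field, then it has infinitely many over every such field. Establishing $X(L)\ne\emptyset$ for your single field $L$ does not give hypothesis (2), so you cannot invoke $(2)\Rightarrow(3)$ to conclude $|X(L)|=\infty$. Nor does $X(L)\ne\emptyset$ yield condition (1): a point over some $L\subset\Qb$ need not produce points over every $\Q_p$ and over $\R$.

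Second, even if this were repaired, the argument would be circular: in the paper, the implication $(2)\Rightarrow(3)$ of Theorem~\ref{Hasse-Minkowski} is deduced precisely from Theorem~\ref{Infinity-Zero}. So Theorem~\ref{Hasse-Minkowski} cannot be used as a black box here.

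The paper's actual proof of Theorem~\ref{Infinity-Zero} is self-contained and purely combinatorial. After translating the given $K$-point to arrange $a'+b'+c'=0$, one works inside a number field $E\subset K$ containing $a',b',c'$ and uses that $H:=(K^\times)^n\cap E^\times$ has finite index in $E^\times$ (this is where finite generation of $G_K$ enters). The Hales--Jewett theorem, applied to the finite colouring of $\{p,q,r\}^k$ by $H$-cosets, produces triples $A+Bp,\,A+Bq,\,A+Br$ in a common coset; their $n$th roots lie in $K$ and give points on $X'$. No local conditions and no appeal to Theorem~\ref{Hasse-Minkowski} are involved.

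Your specialization step---forcing the residue field of a place of $K$ trivial on $L$ to equal $L$ exactly---is also only a sketch, and you correctly flag it as the obstacle; but since the overall strategy collapses for the reasons above, this is secondary.
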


The proof of Theorem~\ref{Infinity-Zero} is purely combinatorial, following the strategy of \cite{IL}.

The proof that (1) implies (2) is more difficult and depends on the following Hasse principle, unusual in that we need to
consider finite combinations of local
conditions:

\begin{thm}
\label{Linear}
Let $G$ denote a finite index subgroup of $\Q^\times$, and let $a,b,c$ belong to $\Q^\times$.
For every set $S$ of places of $\Q$, we define $\Q_S := \prod_{v\in S} \Q_v$ and let $G_S$ denote the closure of $G$ in $\Q_S^\times$.
Then
\begin{equation}
\label{abc}
ax+by+cz=0
\end{equation}
has a solution for $x,y,z\in G$ if and only if the same equation has a solution in $G_S$ for all finite $S$.
\end{thm}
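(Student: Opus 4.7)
The direction global $\Rightarrow$ local is trivial; I focus on the converse. The plan is to translate the multiplicative constraint $x,y,z\in G$ via characters, isolate the finitely many places where it is nontrivial, and then produce a global solution via Dirichlet's theorem on primes in arithmetic progressions.

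First, since $[\Q^\times:G]<\infty$, we may write $G=\bigcap_{i=1}^m\ker\chi_i$ for finitely many finite-order characters $\chi_i\colon\Q^\times\to\mu$. The structure $\Q^\times\cong\{\pm1\}\oplus\bigoplus_p\Z$ forces the existence of a finite set of primes $S_0$ such that every prime $p\notin S_0$ lies in $G$, and
$$\chi_i(x)=\chi_i(\sgn(x))\prod_{q\in S_0}\chi_i(q)^{v_q(x)}.$$
A density argument (using Dirichlet to freely adjust unit residues modulo $p$ while modifying $S_0$-valuations so as to stay in $G$) then shows that $G$ is dense in $\Q_p^\times$ whenever $p\notin S_0$, so the local-solvability hypothesis is nontrivial only at the finite set $T:=S_0\cup\{\infty\}$.

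Next, dividing through by $z\in G$ reduces the problem to finding $t\in G$ with $-(bt+c)/a\in G$. I would search for $t$ of the form $t=wp$, where $p$ is a large prime outside $S_0$ (hence automatically in $G$) and $w\in G$ is a correction factor controlling the $S_0$-valuations. For $p$ large, the requirement $\chi_i(-(bwp+c)/a)=1$ decomposes into a sign condition at $\infty$ (involving the signs of $a,b,w$) and, for each $q\in S_0$, a condition on $v_q(bwp+c)$ that is controlled by $p$ modulo a sufficiently high power of $q$.

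From the hypothesis at each $v\in T$ one extracts a local solution, which after scaling gives $t_v\in G_v$ with $-(bt_v+c)/a\in G_v$; from these one reads off both the required sign for $w$ and the target values of $v_q(bwp+c)$ at each $q\in S_0$. Choosing $w\in G$ realizing the prescribed $S_0$-valuations and sign, then assembling the residue conditions on $p$ via the Chinese remainder theorem into a single arithmetic progression modulo some integer $M$, Dirichlet's theorem on primes in arithmetic progressions produces a prime $p$ in that progression, yielding the desired global solution $(x,y,z)=(-(bwp+c)/a,wp,1)\in G^3$.

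The main obstacle is the simultaneous compatibility of all these choices: that $w\in G$ can be picked to match all the local data, and that the resulting residue conditions on $p$ are jointly consistent. This is precisely where the full hypothesis of local solvability at every $v\in T$ must be used: it should force a product-formula identity among the local values of the $\chi_i$ which rules out a "ghost" obstruction that could otherwise prevent a valid $p$ from existing. The technical heart of the proof is this compatibility check.
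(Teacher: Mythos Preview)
Your proposal contains a genuine gap at the very first structural step. You assert that finite index of $G$ in $\Q^\times$ forces the existence of a finite set $S_0$ of primes such that every prime $p\notin S_0$ lies in $G$, equivalently that each character $\chi_i$ of $\Q^\times/G$ satisfies $\chi_i(x)=\chi_i(\sgn x)\prod_{q\in S_0}\chi_i(q)^{v_q(x)}$. This is false. Take $G=\{x\in\Q^\times:\sum_p v_p(x)\equiv 0\pmod 2\}$, the kernel of the Liouville character $\lambda$. Then $[\Q^\times:G]=2$, yet $\lambda(p)=-1$ for \emph{every} prime $p$, so no prime whatsoever lies in $G$. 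Your search for $t=wp$ with $p$ a large prime ``automatically in $G$'' therefore cannot even begin, and the subsequent reduction of the membership condition to congruences at finitely many places collapses. In general, a character of a finite quotient of $\Q^\times\cong\{\pm1\}\oplus\bigoplus_p\Z$ may depend on the valuations at \emph{all} primes simultaneously; finite index imposes no support restriction.

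This is precisely the difficulty the paper's proof is built to address. The characters of $Q=\Q^\times/G$ are split into the \emph{pretentious} ones, which agree with a fixed Dirichlet character $\psi$ outside a thin set $\PP_f$ of primes with $\sum_{p\in\PP_f}p^{-1}<\infty$, and the non-pretentious ones (such as $\lambda$ above). For the latter no reduction to finitely many places exists; instead their contribution to the circle-method count of solutions of $ax+by+cz=0$ in $G\cap[1,N]$ is shown to be $o(N^2)$ via Hal\'asz's theorem and the Montgomery--Vaughan bound (Lemma~\ref{dont-pretend}). Only after these terms are discarded does something like your picture emerge: the pretentious characters are absorbed into a congruence condition mod $D_G$ and handled by the d\'evissage of Lemma~\ref{devissage} together with lattice-point counting (Lemma~\ref{product}). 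So the ``compatibility check'' you flag as the technical heart is not the real obstacle; the real obstacle is that the constraint $x\in G$ need not be locally determined at all, and overcoming it requires analytic input well beyond Dirichlet's theorem.
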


It is a striking fact that it does not suffice to check solvability in $G_S$ for singleton sets $S=\{v\}$---see Proposition~\ref{Doubles} below.
We remark also that solving (\ref{abc}) in $G$ is equivalent to solving it in any coset of $G$.  Richard Rado \cite{R} considered
which systems of homogeneous linear equations have the property that for every finite partition of $\N$, the system can be solved with all variables belonging
to a single part of the partition.  In the case of a single equation (\ref{abc}), the system satisfies this property if and only if $a+b=0$, $b+c=0$, $c+a=0$, or $a+b+c=0$.
In these special cases, therefore, Theorem~\ref{Linear} follows directly from Rado's theorem.  This corresponds to the fact that Theorem~\ref{Infinity-Zero} can be deduced from Ramsey theory, while the general case of Theorem~\ref{Hasse-Minkowski} requires the circle method.

\section{The Circle Method and Multiplicative Functions on $\Q$}
In this section, we apply the circle method to prove Theorem~\ref{Linear}.  We begin with some preliminary lemmas.

We fix a finite index subgroup $G\subset \Q^\times$ and non-zero $a,b,c\in\Q$
such that $ax+by+cz=0$ has a solution in $G_S$ for all finite sets $S$ of places of $\Q$.
We can freely replace $a$, $b$, or $c$ by any element in its $G$-coset, and we are free to multiply all three of them by a common non-zero rational number.

\begin{lem}
\label{devissage}
For all integers $D>0$, 
there exist elements $x,y,z\in G$ and $w\in \Q^\times$ such that $a' := wax$, $b' := wby$, $c' := wcz$ satisfy the following properties:
\begin{enumerate}
\item[(a)] $\min(a',b',c')<0$,
\item[(b)] $\max(a',b',c')>0$,
\item[(c)] $a' + b' + c' \equiv 0 \pmod D,$
\item[(d)] $a'$, $b'$ and $c'$ are pairwise relatively prime,
\item[(e)] $a',b',c'\in \Z$,
\item[(f)] $a'b'c'$ is even.
\end{enumerate}
\end{lem}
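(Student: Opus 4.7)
I would take $S$ to be the finite set of places of $\Q$ consisting of $\infty$, the prime $2$, and every prime dividing $abcD$. The hypothesis furnishes a solution $(x_0,y_0,z_0)\in G_S^3$ of $ax_0+by_0+cz_0=0$ in $\Q_S$. Since $G$ is by definition dense in $G_S$, weak approximation lets me select $(x,y,z)\in G^3$ matching $(x_0,y_0,z_0)$ at the places of $S$ as closely as I wish: in particular I would arrange that $v_p(x)=v_p(x_0)$, $v_p(y)=v_p(y_0)$, $v_p(z)=v_p(z_0)$ for every finite $p\in S$, that $v_p(ax+by+cz)$ exceeds $v_p(D)$ by a large buffer, and that $x$, $y$, $z$ share signs with $x_0$, $y_0$, $z_0$.

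I would then define $w\in\Q^\times$ by $v_p(w)=-\min\bigl(v_p(ax),v_p(by),v_p(cz)\bigr)$ for every prime $p$, so that $a':=wax$, $b':=wby$, $c':=wcz$ are integers with $\gcd(a',b',c')=1$. Conditions (a) and (b) follow because the sign pattern of $(ax,by,cz)$ coincides with that of $(ax_0,by_0,cz_0)$, which sums to zero with nonzero entries and so is not monochromatic; multiplication by the real number $w$ preserves or uniformly flips the pattern. Condition (e) is immediate from the definition of $w$, and (c) holds because $a'+b'+c'=w(ax+by+cz)$ has $p$-adic valuation at least $v_p(D)$ at each $p\mid D$ by the approximation.

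The delicate condition is (d), which is equivalent to the statement ($\star_p$) that at every prime $p$ at least two of $v_p(ax),v_p(by),v_p(cz)$ equal their minimum. For $p\in S$ finite, ($\star_p$) follows from the ultrametric triangle inequality applied to $ax_0+by_0+cz_0=0$ in $\Q_p$ together with the exact matching of valuations at $p$. For $p\notin S$ one has $v_p(a)=v_p(b)=v_p(c)=0$, so ($\star_p$) reduces to a condition on $(v_p(x),v_p(y),v_p(z))$ alone, and enforcing it at all such primes is the main obstacle. I would handle it by first enlarging $S$ and then, after approximation, multiplying $(x,y,z)$ by elements of $G$ supported at the problematic primes to align valuations without disturbing the $S$-adic approximation appreciably; this exploits that $v_p(G)=\operatorname{ord}(\phi(p))\cdot\Z$, where $\phi\colon\Q^\times\to\Q^\times/G$ is the quotient map, so that $v_p$-differences between elements of $G$ at $p$ can be adjusted by elements of $G$ of small support. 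Condition (f) is arranged by adjusting the approximation at $p=2$ so that $v_2(ax),v_2(by),v_2(cz)$ are not all equal, forcing exactly one of $a'$, $b'$, $c'$ to be even after scaling.
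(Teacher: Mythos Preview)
Your outline handles (a), (b), (c), (e), and (f) correctly, and you are right that (d) is the crux.  However, the proposed repair for (d) at primes $p\notin S$ does not work as written, and this is a genuine gap.

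First, the assertion $v_p(G)=\operatorname{ord}(\phi(p))\cdot\Z$ is false in general: what equals $\operatorname{ord}(\phi(p))\cdot\Z$ is $v_p(G\cap p^{\Z})$, the $v_p$-image of the elements of $G$ supported \emph{only} at $p$.  More importantly, these are exactly the ``elements of $G$ supported at the problematic primes'' you propose to use, and they are inadequate for two reasons.  If the defect at $p$ is that, say, $v_p(ax)$ lies strictly below $v_p(by)=v_p(cz)$ by an amount not divisible by $\operatorname{ord}(\phi(p))$, no power of $p$ lying in $G$ can correct it.  Even when the order condition is met, multiplying one of $x,y,z$ by $p^k$ multiplies the corresponding term by a unit that is generally not $\equiv 1\pmod D$, so condition (c) is destroyed.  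Your suggestion to ``enlarge $S$ and then re-approximate'' does not break this circularity: a new approximant $x,y,z$ will in general have nonzero valuation at a fresh finite set of primes outside the enlarged $S$, and the same problem recurs.

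The paper closes this gap by building $D$ into the construction of $S$ from the outset.  One considers the map $\phi=(\phi_1,\phi_2)\colon \PP\setminus\PP_0\to Q\times(\Z/D\Z)^\times$, with $\phi_1$ the coset map and $\phi_2$ reduction mod $D$, and takes $S$ large enough that for every $p\notin S$ the element $\phi(p)$ lies in the subgroup generated by $\{\phi(q):q>M\}$ for arbitrarily large $M$.  This guarantees, for each problematic $p\notin S$, an integer $m$ that is a product of primes as large as desired with $pm\in G$ \emph{and} $pm\equiv 1\pmod D$.  Multiplying the offending coordinate by $pm$ then raises its $p$-valuation by exactly one, preserves the $G$-coset, preserves the residue mod $D$ and the sign, and introduces no new common factors because the primes of $m$ are new and occur in only one coordinate.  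Iterating finitely many times yields (d) and (e) without disturbing (a)--(c).  This simultaneous control over $Q$ and over $(\Z/D\Z)^\times$ is the missing idea in your sketch.
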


\begin{proof}
The proof consists of a series of steps in which we replace $a$, $b$, and $c$ by $wax$, $wby$, and $wcz$ respectively,
with the goal that at the end of the process, the resulting triple $a,b,c$ satisfies properties (a)--(f).

Let $\PP$ denote the set of all prime numbers
and $\PP_0$  the 
set of prime divisors of $D$.
Let $Q := \Q^\times/G$, and define 
$$\phi = (\phi_1,\phi_2)\colon \PP\setminus \PP_0\to Q\times (\Z/D\Z)^\times,$$ 
where $\phi_1$
denotes the restriction of the quotient map $\Q^\times\to Q$ to $\PP\setminus \PP_0$ and $\phi_2$ denotes the 
restriction of $\Z\to \Z/D\Z$ to $\PP\setminus \PP_0$.
Let $S$ be the  
union of all finite subsets of the form $\PP\cup\{\infty\}\setminus \phi^{-1}(Q')$
where $Q'$ is a subgroup of $Q\times (\Z/D\Z)^\times$.  Thus $S$ is finite, and if $p\not\in S$ and $M$ is a given integer, then there exists a product $m$
of primes $>M$ such that $\phi(pm)=1$.

By hypothesis, equation (\ref{abc}) has a solution $(x_S,y_S,z_S)$ in $G_S$.
Let $x_v$ denote the $v$-component of $x_S$ for $v\in S$ and likewise for $y_v,z_v$.  
As $a x_\infty + b y_\infty + c z_\infty = 0$, it follows that replacing $a,b,c$ by 
$a x,b y,c z$, where $x,y,z$ are sufficiently close to $x_\infty$, $y_\infty$, $z_\infty$,
the resulting triple satisfies properties (a) and (b).

Choose $k$ to be a positive integer larger than 
$$\max_{p\in S} \max(v_p(x_p),v_p(y_p),v_p(z_p))+v_p(D)$$
and choose $x,y,z\in G$ such that for all $p\in S\setminus\{\infty\}$,
$$v_p(x_p - x), v_p(y_p-y), v_p(z_p-z) > k,$$
and $ax$, $by$, and $cz$ are neither all positive nor all negative.  
Multiplying each of these by 
$$w := \prod_{p\in S} p^{-\min(v_p(ax),v_p(by), v_p(cz))},$$
we obtain $wax$, $wby$, $wcz$ which add to $0$ (mod $D$) and to zero (mod $p$) for each $p\in S$.
Moreover, for each $p$, all three belong to $\Z_p$, and at least one of the three belongs to $\Z_p^\times$; as they sum to zero (mod $p$), at least two are units.
Replacing $a,b,c$ by $wax,wby,wcz$, the resulting triple now satisfies properties (a)--(c), and at most one of $v_p(a),v_p(b),v_p(c)$ is positive for $p\in S$.

If $a$, $b$, or $c$ fails to be $p$-integral for some $p\not\in S$, by definition of $S$, there exists $m\in \N$ such that $pm\in G$, $pm\equiv 1$ (mod $D$), 
and all prime factors 
of $m$ are as large as we may wish.  In particular, we may assume that for each prime factor $q$ of $m$, $q\neq p$, $q\not\in S$, and $v_q(a)=v_q(b)=v_q(c)=0$.
Multiplying  by $pm$ eliminates a factor of $p$ from the denominator of
the desired element, $a$, $b$, or $c$, without changing the residue class (mod $D$) or the sign of the given element
or introducing a common prime factor of any two elements of the set.
Continuing this process as long as necessary, we can assume that
the resulting elements satisfy (a)--(e).
 If $a$, $b$, and $c$ are all odd, then $D$ is odd as well, so $2^k\equiv 1$ (mod $D$) for some positive integer $k$ divisible by $|Q|$; replacing
$a$ by $2^ka$, we obtain a new triple $a,b,c$ satisfying properties (a)--(f).
\end{proof}

\begin{lem}
\label{product}
Let $D$ be a positive integer.
Let $a,b,c$ be integers satisfying conditions (a)--(f).  There exists a constant $\epsilon > 0$ and for every prime $p$ a constant $d_p > \max(1,1-3/p)$
such that for every finite set $S$ of primes not dividing $D$, the number of solutions of (\ref{abc}) in $x,y,z\in (1+D\Z )\cap [0,N]$ such that $xyz$ is not divisible by any prime in $S$ is
at least
$$N^2 \epsilon \prod_{p\in S} d_p$$
for all $N$ sufficiently large.
\end{lem}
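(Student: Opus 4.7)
The plan is to count the solutions by applying the circle method to the linear form $ax+by+cz$. Set
$$F(\alpha) := \sum_{\substack{n \in [1,N] \cap (1+D\Z) \\ \gcd(n,Q_S)=1}} e(n\alpha), \qquad Q_S := \prod_{p \in S} p,$$
with $e(t) := e^{2\pi i t}$, so that the quantity to bound is
$$\mathcal{N}_S \;=\; \int_0^1 F(a\alpha)\, F(b\alpha)\, F(c\alpha)\, d\alpha.$$
I would then carry out a standard Hardy--Littlewood major/minor arc decomposition of $[0,1]$: major arcs $\mathfrak{M}$ around rationals $r/q$ with $q \le P = P(N)$ chosen as a slowly growing function of $N$, and minor arcs $\mathfrak{m}$ covering the rest.

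On each major arc around $r/q$, $F(\alpha)$ factors, up to a negligible error, as a Gauss-sum-type local coefficient times the smooth integral $\int_0^N e(\beta t)\,dt$, where $\beta = \alpha - r/q$. Assembling these contributions via CRT over the primes dividing $q$ yields a main term $N^2 \cdot J \cdot \mathfrak{S}$, where the singular integral $J = \operatorname{vol}\{(x,y,z) \in [0,1]^3 : ax+by+cz=0\}$ is strictly positive by the sign conditions (a) and (b), and the singular series $\mathfrak{S} = \prod_p \sigma_p$ factors over primes. For $p \not\in S$ with $p \nmid abcD$ one has $\sigma_p = 1$. For $p \in S$ with $p \nmid abc$, a direct count of residues gives $\sigma_p = d_p := (p-1)(p-2)/p^2 = 1 - 3/p + 2/p^2 > 1 - 3/p$. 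For $p \in S$ with $p \mid abc$, the pairwise coprimality condition (d) forces $p$ to divide exactly one of $a,b,c$, and a similar count produces $\sigma_p = d_p := (p-1)^2/p^2 > 1 - 3/p$. For primes with $p \mid D$ or $p \mid abc$ and $p \not\in S$, conditions (c) (compatibility of $a+b+c$ modulo $D$), (d), and (f) (nontriviality at $p=2$) together guarantee that $\sigma_p$ is a strictly positive constant depending only on $a,b,c,D$.

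The minor-arc contribution is dominated by $\sup_{\alpha \in \mathfrak{m}}|F(\alpha)| \cdot \int_0^1 |F(b\alpha)F(c\alpha)|\, d\alpha$, with Parseval bounding the integral by $O(F(0))$. It therefore suffices to prove the pointwise bound $\sup_{\alpha \in \mathfrak{m}}|F(\alpha)| = o(F(0))$ with enough savings to beat the target main term. A natural route is to open the sieve by M\"obius inversion, $\mathbf{1}_{\gcd(n,Q_S)=1} = \sum_{d \mid \gcd(n,Q_S)} \mu(d)$, and then estimate each of the resulting linear exponential sums in arithmetic progressions by partial summation. The main obstacle lies precisely here: since the target lower bound $N^2 \epsilon \prod_{p \in S} d_p$ decreases as $|S|$ grows, the minor-arc bound must improve at matching rate uniformly in $S$, and $P = P(N)$ must be chosen so that the error remains of strictly smaller order than the main term even when the sifted set has vanishing density. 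Once this uniform control is achieved, combining the major-arc main term with the negligible minor-arc error yields $\mathcal{N}_S \ge \epsilon\, N^2 \prod_{p \in S} d_p$ for a constant $\epsilon > 0$ depending only on $a,b,c,D$.
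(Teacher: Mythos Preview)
Your approach via a full Hardy--Littlewood dissection is dramatically heavier than what the problem requires, and you leave the one step you flag as the ``main obstacle'' unproved. The paper's argument is entirely elementary: the solutions of $ax+by+cz=0$ with $x,y,z\in 1+D\Z$ form a translate of a rank-$2$ lattice $\Lambda$, and the intersection with $[0,N]^3$ is a fixed polygon scaled by $N$, so the count is $AN^2+O(N)$. For each $p\in S$, pairwise coprimality of $a,b,c$ (condition (d)) forces each of $p\mid x$, $p\mid y$, $p\mid z$ to cut out an index-$p$ sublattice of $\Lambda$; hence the points with $p\nmid xyz$ form $p^2\alpha_p$ cosets of $p\Lambda$ with $\alpha_p>1-3/p$ (and condition (f) guarantees $\alpha_2>0$). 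By the Chinese Remainder Theorem the intersection over $p\in S$ is a union of $\prod_{p\in S}p^2\alpha_p$ cosets of $(\prod_{p\in S}p)\Lambda$, and one more application of the lattice-point estimate finishes the proof. Your local densities $d_p=(p-1)(p-2)/p^2$ and $(p-1)^2/p^2$ are exactly the $\alpha_p$ that arise this way, so that part of your computation is correct.

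The ``obstacle'' you isolate is a phantom: the clause ``for all $N$ sufficiently large'' in the statement allows the threshold to depend on $S$, so no uniformity of the minor-arc saving in $|S|$ is needed. Even so, you have not actually established any minor-arc bound for the sifted linear sum $F$; you only point at M\"obius inversion and partial summation. For a degree-one equation there is no genuine minor-arc difficulty---the whole problem is an exact lattice-point count in a union of arithmetic progressions---so bringing in a major/minor arc decomposition introduces complications (controlling $\|dD\alpha\|$ over all $d\mid Q_S$, choosing $P(N)$, etc.) that simply do not arise in the direct argument.
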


\begin{proof}
By conditions (a)--(c), the intersection of $ax+by+cz=0$ with the cube $[0,N]^3$ is a non-trivial polygonal region which up to homothety is independent of $N$.
The intersection of $ax+by+cz=0$ with $(1+D\Z )^3$ is the translate of a $2$-dimensional lattice.  If $\Lambda$ is a lattice and $R$ is a polygonal region, then
\begin{equation}
\label{coarea}
|\Lambda\cap (v+tR)| = \frac{\mathrm{Area}(R)}{\mathrm{Coarea}(\Lambda)}t^2 + O(t).
\end{equation}
Thus, the number of solutions of (\ref{abc}) in $x,y,z\in (1+D\Z )\cap [0,N]$ is of the form $AN^2+O(N)$.
By condition (d), for each $p\in S$, the conditions $p|x$, $p|y$, and $p|z$ each define a sublattice of $\Lambda$ of index $p$, so the subset $\Lambda_p$
of $\Lambda$ satisfying the condition $p\nmid xyz$
is the union of $p^2\alpha_p$ cosets of $p\Lambda$, where $\alpha_p > 1-3/p$.  By condition (f), $\alpha_2 > 0$ if $2\in S$.

Thus, $\bigcap_{p\in S} \Lambda_p$ is the union of $\prod_{p\in S} p^2\alpha_p$
cosets of $(\prod_{p\in S} p )\Lambda$.  The lemma now follows from (\ref{coarea}). 
\end{proof}

Let $X$, $Y$, and $Z$ be finite sets of integers.  The number of solutions of (\ref{abc}) with $x\in X$, $y\in Y$, and
$z\in Z$ can be written
\begin{equation}
\label{circle}
\int_0^1 \sum_{x\in X} e(axt) \sum_{y\in Y}  e(byt) \sum_{z\in Z} e(czt)\,dt
\end{equation}
where $e(t) := e^{2\pi i t}$.  

\begin{lem}
\label{ineq}
If $|\alpha_x|=|\beta_y|=|\gamma_z|=1$ for all $x,y,z$, then
\begin{align*}
\bigm|\int_0^1\sum_{x\in X} \alpha_x e(axt) &\sum_{y\in Y}  \beta_y e(byt) \sum_{z\in Z} \gamma_z e(czt) \bigm| \\
&\le \sup_t\bigm| \sum_{x\in X} \alpha_x e(axt)\bigm| |Y|^{1/2} |Z|^{1/2}.
\end{align*}
\end{lem}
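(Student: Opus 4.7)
The plan is to isolate the $x$-sum using its sup-norm bound, reducing the three-fold integral to an $L^1$ estimate for the product of the $y$- and $z$-sums, and then to bound this $L^1$ estimate by Cauchy--Schwarz and Parseval.

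More concretely, write $f(t) := \sum_{x\in X}\alpha_x e(axt)$, $g(t) := \sum_{y\in Y}\beta_y e(byt)$, and $h(t) := \sum_{z\in Z}\gamma_z e(czt)$. The first step is the trivial pointwise bound
\begin{equation*}
\left|\int_0^1 f(t)g(t)h(t)\,dt\right| \le (\sup_t |f(t)|)\int_0^1 |g(t)||h(t)|\,dt.
\end{equation*}
The second step is Cauchy--Schwarz applied to the integral on the right, giving
\begin{equation*}
\int_0^1 |g(t)||h(t)|\,dt \le \left(\int_0^1 |g(t)|^2\,dt\right)^{1/2}\left(\int_0^1 |h(t)|^2\,dt\right)^{1/2}.
\end{equation*}

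The third step is to compute each $L^2$ norm by orthogonality. Expanding $|g(t)|^2 = \sum_{y,y'\in Y}\beta_y\bar\beta_{y'} e(b(y-y')t)$ and integrating term-by-term, only the diagonal $y=y'$ survives, since $b\ne 0$ implies that the integers $by$ are distinct for distinct $y\in Y$. Using $|\beta_y|=1$, we get $\int_0^1 |g(t)|^2\,dt = |Y|$, and analogously $\int_0^1 |h(t)|^2\,dt = |Z|$. Combining these three steps yields the asserted inequality. There is no genuine obstacle here; this is a standard circle-method packaging argument, and the only point to verify is that $b,c\ne 0$ so that the characters $e(byt)$ (respectively $e(czt)$) remain orthogonal as $y$ (respectively $z$) varies.
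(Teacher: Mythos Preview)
Your proof is correct and is essentially identical to the paper's: the paper phrases the first two steps as a single application of H\"older with exponents $(\infty,2,2)$ (which is exactly your ``pull out the sup, then Cauchy--Schwarz''), and then computes the $L^2$ norms by Parseval just as you do. Your remark that $b,c\neq 0$ (and, in context, integral) is what makes the exponentials orthogonal is the one point the paper leaves implicit.
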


\begin{proof}
By H\"older and Cauchy-Schwartz,
\begin{align*}
\bigm|\int_0^1\sum_{x\in X} \alpha_x e(axt) &\sum_{y\in Y}  \beta_y e(byt) \sum_{z\in Z} \gamma_z e(czt) \bigm| \\
&\le \Vert  \sum_{x\in X} \alpha_x e(axt)\Vert_\infty\; \Vert \!\sum_{y\in X} \beta_y e(byt)\Vert_2 \;\Vert \!\sum_{z\in X} \gamma_z e(czt)\Vert_2 \\
	&= \sup_{t\in [0,1]} \bigm|\sum_{x\in X} \alpha_x e(axt)\bigm| |Y|^{1/2} |Z|^{1/2}.
\end{align*}
\end{proof}

\begin{cor}
\label{change}
If $\delta > 0$, $X'\subset X$  has at least $(1-\delta )|X|$ elements, and $|\alpha_x|=|\beta_x|=|\gamma_x|=1$ for all $x\in X$, then
\begin{align*}
\Bigm|\int_0^1\sum_{x\in X} &\alpha_x e(axt) \sum_{y\in X}  \beta_y e(byt) \sum_{z\in X} \gamma_z e(czt)\,dt \\
& - \int_0^1\sum_{x\in X'} \alpha_x e(axt) \sum_{y\in X'}  \beta_y e(byt) \sum_{z\in X'} \gamma_z e(czt)\,dt\Bigm| \le 3\delta |X|^2.
\end{align*}

\end{cor}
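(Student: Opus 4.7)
My plan is a telescoping argument: write the difference as a sum of three pieces, each of which replaces $X$ by $X'$ in exactly one of the three sums, and bound each piece via Lemma~\ref{ineq}. Letting $F(U,V,W)$ denote the analogous integral with the three summation variables ranging over $U$, $V$, $W$ respectively, the triangle inequality gives
\begin{align*}
|F(X,X,X)-F(X',X',X')| \le {} & |F(X,X,X)-F(X',X,X)| \\
& + |F(X',X,X)-F(X',X',X)| \\
& + |F(X',X',X)-F(X',X',X')|.
\end{align*}
Each term on the right is itself an integral of the form in Lemma~\ref{ineq}, with the twist that in exactly one slot the sum runs over $X\setminus X'$ (with coefficients of modulus one), while in the other two slots it runs over either $X$ or $X'$, both of cardinality at most $|X|$.

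I then apply Lemma~\ref{ineq} to each of the three bracketed terms, placing the sup-norm on the factor indexed by $X\setminus X'$. Although the statement as given puts the $L^\infty$ norm on the first factor, the H\"older/Cauchy--Schwarz proof is manifestly symmetric under permuting the three factors, so the same bound holds regardless of which slot carries the sup-norm. The trivial triangle-inequality bound gives $\sup_t |\sum_{w\in X\setminus X'} \eta_w e(dwt)| \le |X\setminus X'|\le \delta|X|$, and the remaining two factors each contribute at most $|X|^{1/2}$. Thus each telescoping term is at most $\delta|X|\cdot|X|^{1/2}\cdot|X|^{1/2}=\delta|X|^2$, and summing the three yields the desired $3\delta|X|^2$.

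I do not anticipate any real obstacle: the corollary is a formal consequence of Lemma~\ref{ineq} together with the observation that the lemma is symmetric in the roles of the three factors, which is immediate from its proof.
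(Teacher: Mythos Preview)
Your proposal is correct and is precisely the intended derivation: the paper states the corollary without proof, as an immediate consequence of Lemma~\ref{ineq}, and your telescoping argument together with the trivial sup-norm bound $|X\setminus X'|\le\delta|X|$ is exactly how one makes this explicit. Your remark that the H\"older/Cauchy--Schwarz proof of Lemma~\ref{ineq} is symmetric in the three factors is the only subtlety, and you handle it correctly.
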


Regarding the characters $f\in Q^*$ as functions on $\Q^\times$ and therefore on $X$, we can
write
\begin{equation}
\label{Qstar}
\sum_{x\in X\cap G} e(axt) = \frac 1{|Q|}\sum_{f \in Q^*}  \sum_{x\in X} f(x) e(axt),
\end{equation}
and likewise for $ \sum_{y\in X\cap G}  e(byt)$ and $\sum_{z\in X\cap G} e(czt)$.

Every complex character $\chi\colon \Q^\times/G\to U(1)$ defines a homomorphism $\Q^\times\to U(1)$
and hence a strictly multiplicative function on $\N$.  For each such function $f$ there is at most one pair $(\psi,t)$
consisting of a primitive Dirichlet character $\psi$ and a real number $t$ such that
\begin{equation}
\label{pretend}
\sum_p \frac{1-\mathrm{Re} (f(p)\bar\psi(p)p^{-it})}p < \infty,
\end{equation}
where the sum is taken over rational primes.  Following terminology of Granville and Soundararajan \cite{GS}, we will say that $f$ is \emph{pretentious} if such
a pair exists.

If $f$ takes values in  a finite subgroup of $U(1)$ (as in our case, where $f$ arises from a homomorphism $Q\to U(1)$),
and if $(\psi,t)$ satisfies (\ref{pretend}), then $t=0$.  By a theorem of Hal\'asz \cite[III.4~Theorem~4]{Tenenbaum}, for any multiplicative function $f$
which takes values in the unit disk,
\begin{equation}
\label{small-mean}
\sum_{n=1}^N f(n) = o(N)
\end{equation}
unless $f$ satisfies (\ref{pretend}) for some $t$ with $\psi=1$.
In our setting, this means (\ref{small-mean}) holds unless $f(p)=1$ outside a set $\PP_f$ of primes with
$$\sum_{p\in\PP_f} \frac 1p< \infty.$$

We denote by $Q_{\pre}^*$ the set of pretentious elements of $Q^*$.
For each $f\in Q_{\pre}^*$ there exists a unique primitive Dirichlet character $\psi$ such that 
$f$ satisfies (\ref{pretend}) with $t=0$.
We define $\PP_G$ to be the union of all the sets $\PP_{f\psi^{-1}}$  where $f\in Q^*_{\pre}$
and $\psi$ is the primitive character associated to $f$.  Again,
$$\sum_{p\in\PP_G} \frac 1p< \infty.$$
We define $D := D_G$ to be the least common multiple of the conductors of all characters $\psi$ associated with $f\in Q^*_{\pre}$.

For  $h\colon\N\to \C$,  $\alpha\in\R$, and $n\in \N$, we define
$$S_{h,n}(\alpha) := \sum_{x=1}^n e(\alpha x) h(x).$$
\begin{lem}
\label{dont-pretend}
Let $f\colon \N\to \C$ be the restriction of a homomorphism $\Q^\times\to U(1)$ with finite image, $g\colon \Z\to \C$ a periodic function, and $\alpha\in \R$.
If $f$ is not pretentious, then
$$S_{fg,n}(\alpha) = o(n).$$
\end{lem}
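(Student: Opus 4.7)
Since $g$ is periodic, say of period $Q$, we expand it as a finite Fourier series $g(x) = \sum_{j=0}^{Q-1} c_j e(jx/Q)$, giving
$$S_{fg,n}(\alpha) = \sum_{j=0}^{Q-1} c_j \sum_{x=1}^n f(x) e\bigl((\alpha + j/Q)x\bigr).$$
Since $Q$ is fixed, it suffices to prove the one-variable statement $T_n(\beta) := \sum_{x=1}^n f(x)e(\beta x) = o(n)$ for every real $\beta$.

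For irrational $\beta$, this is Daboussi's theorem on the orthogonality of bounded multiplicative functions with additive characters at irrational frequencies; no pretentiousness hypothesis is needed here at all. For rational $\beta = a/m$ in lowest terms, I split $T_n(\beta)$ by residue class mod $m$. A residue $r$ may be written as $r = d r_0$ with $d = \gcd(r,m)$, $m = d m_0$, and $\gcd(r_0, m_0) = 1$, and complete multiplicativity of $f$ together with the substitution $x = dy$ converts the contribution of the class into
$$f(d) \sum_{\substack{y \le n/d \\ y \equiv r_0 \pmod{m_0}}} f(y).$$
Orthogonality of Dirichlet characters mod $m_0$ now expands the inner sum as a finite $\C$-linear combination (indexed by Dirichlet characters $\chi$ mod $m_0$) of the partial sums $\sum_{y \le n/d} (f\chi)(y)$.

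The only substantive point is that $f\chi$ inherits non-pretentiousness from $f$: if $f\chi$ were pretentious to a primitive Dirichlet character $\psi$ and real number $t$, the corresponding sum (\ref{pretend}) would, after absorbing $\chi$ into the Dirichlet twist and using that $\chi\bar\psi$ agrees with its inducing primitive character outside a finite set of primes, witness $f$ itself as pretentious, contradicting the hypothesis. Therefore Hal\'asz's theorem in the form (\ref{small-mean}) applies to $f\chi$ to yield $\sum_{y \le n/d} (f\chi)(y) = o(n)$, and summing the $O(1)$ contributions over the pairs $(r, \chi)$ finishes the proof. The main obstacle, modest as it is, is this verification that non-pretentiousness passes to $f\chi$; the residue-class decomposition is routine given complete multiplicativity, and the irrational-frequency case is handed off wholesale to Daboussi.
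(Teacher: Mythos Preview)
Your proof is correct for the lemma as stated (with $\alpha$ fixed), and it takes a genuinely different route. The paper does not separate irrational from rational frequencies. Instead it invokes the Montgomery--Vaughan exponential-sum bound for rationals $\beta=r/s$ with $m<s<n/m$, handles the finitely many small-denominator rationals by expanding into Dirichlet characters and applying Hal\'asz (essentially your rational-case argument), and then treats an arbitrary real $\alpha$ by Dirichlet-approximating it by some $\beta$ with denominator $<n/m$ and summing by parts. Your route is more economical: Daboussi's theorem disposes of all irrational $\alpha$ in one stroke, and complete multiplicativity lets you reduce each rational $\alpha$ directly to Hal\'asz, so Montgomery--Vaughan and the Abel-summation step never appear.

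The trade-off is uniformity. The paper's final inequality $|S_{fg,n}(\alpha)|\le (m^{-1}+2\epsilon)n$ holds for every real $\alpha$ simultaneously, so the argument in fact yields $\sup_\alpha |S_{fg,n}(\alpha)|=o(n)$. This uniform version is what the application actually needs: in the proof of Theorem~\ref{Linear}, Lemma~\ref{ineq} is invoked with the factor $\sup_t\bigl|\sum_{x\in X_N} f(x)e(axt)\bigr|$, and that supremum must be $o(N)$. Your Daboussi-based argument gives only the pointwise bound, and no version uniform over irrationals is possible (already for $f\equiv 1$, the sum $\sum_{x\le n} e(\alpha x)$ is not $o(n)$ uniformly in irrational $\alpha$). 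So your proof establishes the lemma exactly as written, but the paper's longer argument is purchasing the uniformity that the downstream application requires.
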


\begin{proof}
We claim that for all $\epsilon > 0$, there exists $m$ such that for all $n$  and all fractions $\beta = r/s$ in lowest terms
with $m < s < n/m$, we have
\begin{equation}
\label{rational}
|S_{fg,n}(\beta)| \le \epsilon n.
\end{equation}
Indeed, if $g(x)$ is periodic with period $D$, it can be written as a linear combination of $e(\gamma x)$, $\gamma\in D^{-1}\Z$.
The denominator of $\beta+\gamma$, written as a fraction in lowest terms, lies in $(m/D, Dn/m)$.
By \cite[Theorem~1]{MV}, this implies (\ref{rational}) if $m/D$ is sufficiently large.

If $\beta = r/s$ with $s\le m$, then $S_{fg,\beta}$
is a linear combination of sums of the form  $S_{f,\beta+\gamma}$,
where there are only finitely many possibilities for $\beta+\gamma$ (mod $1$).
For each possibility, $e((\beta+\gamma) x)$ is periodic of some period $k$
and can therefore be written as a linear combination of (not necessarily primitive) (mod $k$) Dirichlet characters.
By (\ref{small-mean}), 
$$S_{f\chi,1}(n) = o(n),$$
so for $n$ sufficiently large,  we have
\begin{equation}
\label{small-den}
|S_{fg,n}(\beta)| \le \frac{\epsilon n}m.
\end{equation}

To deal with  $\alpha\not\in\Q$, we follow \cite[\S6]{MV}.  For each $\alpha$, we choose the rational value $\beta=r/s$ with  $s< n/m$ which is closest to $\alpha$.
Thus, 
$$|\alpha-\beta|\le \frac{m}{ns}.$$
Summing by parts, we have
\begin{align*}
S_{fg,n}(\alpha)&= \sum_{x=1}^n e((\alpha-\beta)x)e(\beta x) f(x)g(x) \\
&= e((\alpha-\beta)n)S_{fg,n}(\beta)+
\sum_{y=1}^{n-1} e((\alpha-\beta)y)(1-e(\alpha-\beta))S_{fg,y}(\beta).
\end{align*}
If $s\ge m$, by (\ref{rational}),
\begin{align*}
|S_{fg,n}(\alpha)|&\le |S_{fg,n}(\beta)| + |\alpha-\beta|\sum_{1\le y\le n/m} |S_{fg,y}(\beta)| + |\alpha-\beta|\sum_{n/m<y\le n} |S_{fg,y}(\beta)| \\
&\le
\epsilon n + \frac1n \bigl(\frac nm\bigr)^2 + \frac 1n n^2\epsilon \le  (\frac 1{m^2}+2\epsilon)n.
\end{align*}
If $s < m$, by (\ref{small-den}),
\begin{align*}
|S_{fg,n}(\alpha)|&\le |S_{fg,n}(\beta)| + |\alpha-\beta|\sum_{1\le y\le n/m} |S_{fg,y}(\beta)| + |\alpha-\beta|\sum_{n/m<y\le n} |S_{fg,y}(\beta)| \\
&\le
\epsilon n+ \frac mn \bigl(\frac nm\bigr)^2 + \frac mn \frac{n^2\epsilon }{m} \le (\frac 1{m}+2\epsilon)n.
\end{align*}
Either way, sending $\epsilon\to 0$ and $m\to \infty$, we get the lemma.
\end{proof}

We can now prove Theorem~\ref{Linear}.
\begin{proof}
Applying Lemma~\ref{devissage} with $D=D_G$, we may assume $a,b,c$ satisfy conditions (a)--(f).
Given $\delta > 0$, let $T(\delta)$ denote the smallest integer such that 
$$\sum_{p\in \PP_G\cap [T(\delta),\infty)} \frac 1p < \delta.$$
Let $\X$  consist of all integers congruent to $1$ (mod $D$) and not divisible by any prime $p\in \PP_G\cap [2,T(\delta)]$.
Let $\X'$ denote the set of elements of $\X$ divisible by no prime in $\PP_G$.  Let $X_N := \X\cap [1,N]$ and $X'_N := \X'\cap [1,N]$.
By construction, 
$$|(X_N\cap G)\setminus (X'_n\cap G)| \le |X_N\setminus X'_N| < \delta N$$ 
for $N$ sufficiently large.  Moreover, 
$$f(x)=g(y)=h(z)=1$$
for all $f,g,h\in Q^*_{\pre}$ and $x,y,z\in X'_N$.

Let $\Sigma(X)$ denote the number of solutions of $ax+by+cz=0$ with $x,y,z\in X$.
By (\ref{circle}) and (\ref{Qstar}), $\Sigma(X_N\cap G)$ is given by
\begin{equation}
\label{XNG}
|Q|^{-3}\sum_{f,g,h\in Q^*}\int_0^1 \bigl( \sum_{x\in X_N} f(x)e(axt)\bigr) \bigl( \sum_{y\in X_N} g(y)e(byt)\bigr)\bigl( \sum_{z\in X_N} h(z)e(czt)\bigr)\,dt.
\end{equation}
By Lemma~\ref{ineq} and Lemma~\ref{dont-pretend}, if $f$ is not pretentious, the summand is $o(N^2)$.  The same is true if $g$ or $h$ is not pretentious.

By construction, for $f,g,h\in Q^*_{\pre}$, we have 
$f(x)=g(y)=h(z)=1$ for all $x,y,z\in X'_N$, so by (\ref{circle}),
$$\Sigma(X'_N) = \int_0^1 \bigl( \sum_{x\in X'_N} f(x) e(axt)\bigr) \bigl( \sum_{y\in X'_N} g(y) e(byt)\bigr)\bigl( \sum_{z\in X'_N} h(z) e(czt)\bigr)\,dt.$$
Applying Corollary~\ref{change} twice, we have
\begin{align*}
&\Bigm|\int_0^1 \bigl( \sum_{x\in X_N} f(x)e(axt)\bigr) \bigl( \sum_{y\in X_N} g(y)e(byt)\bigr)\bigl( \sum_{z\in X_N} h(z)e(czt)\bigr)\,dt - \Sigma(X_N) \Bigm|\\
\le &\Bigm|\int_0^1 \bigl( \sum_{x\in X_N} f(x)e(axt)\bigr) \bigl( \sum_{y\in X_N} g(y)e(byt)\bigr)\bigl( \sum_{z\in X_N} h(z)e(czt)\bigr)\,dt - \Sigma(X'_N)\Bigm| \\
&\qquad\qquad + |\Sigma(X'_N)-\Sigma(X_N)| \\
\le &6\delta |X_N|^2.
\end{align*}
Combining this with (\ref{XNG}), we obtain
$$\Bigm|\Sigma(X_N\cap G) - \frac{|Q^*_{\pre}|^3}{|Q|^3}\Sigma(X_N)\Bigm| = O(\delta N^2).$$

Since $\sum_{p\in \PP_G} p^{-1} < \infty$,  Lemma~\ref{product} implies
$$\limsup \frac{\Sigma(X_N)}{N^2} > 0.$$
It follows that by choosing $\delta$ sufficiently small, we can guarantee
$$\limsup \frac{\Sigma(X_N\cap G)}{N^2} > 0.$$
\end{proof}

We remark that the method of proof applies equally to the problem of solving the linear equation $ax+by+cz=0$ where
$x\in X$, $y\in Y$, and $z\in Z$, where $X$, $Y$, and $Z$ are possibly distinct finite index subgroups of $\Q^\times$.

We conclude this section with a proposition  showing that the equation (\ref{abc}) with $x,y,z\in G$
does not satisfy the naive Hasse principle.

\begin{prop}
\label{Doubles}
There exists a finite index subgroup $G$ of $\Q^\times$ and non-zero $a,b,c\in \Z$
such that $ax+by+cz=0$
has no solution in $G$ but does have a solution in the completion of $G$ in $\Q_v^\times$ for each place $v$ of $\Q$.
\end{prop}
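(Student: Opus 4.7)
The plan is to exhibit an explicit counterexample $(G,a,b,c)$ to the naive one-place-at-a-time Hasse principle. By Theorem~\ref{Linear}, any such failure must come from a character of $Q=\Q^\times/G$ that couples information at two (or more) distinct places: such coupling is invisible at any individual completion $G_v$ but becomes visible once one considers $G_S$ for an $S$ containing the coupled places.

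Accordingly, I would take $G=\ker\chi$ for a character $\chi\colon\Q^\times\to\{\pm 1\}$ whose value depends jointly on data at two small primes $p_1$ and $p_2$; a natural candidate is a product $\chi(x)=\chi_{p_1}(x)\chi_{p_2}(x)$ of two Legendre-type characters suitably extended to all of $\Q^\times$, or the pure valuation parity $\chi(x)=(-1)^{v_{p_1}(x)+v_{p_2}(x)}$. With such a $\chi$, weak approximation immediately yields $G_v=\Q_v^\times$ at every finite $v$ and $G_\infty=\R^\times$ (after checking $-1\in G$), so that local solvability of $ax+by+cz=0$ in each individual $G_v$ is automatic for any $(a,b,c)$ of mixed sign.

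The coefficients $a,b,c$ are then tuned so that any hypothetical global solution $(x,y,z)\in G^3$ would force $\chi(x)\chi(y)\chi(z)\neq 1$: in the valuation-parity version, the equation $ax+by+cz=0$ constrains the combined $p_1$- and $p_2$-adic valuations of a solution to an incompatible parity; in the Legendre version, to an inconsistent Hilbert-symbol identity. The obstruction is intrinsically a two-place phenomenon---a sum of local contributions that cancels place-by-place but reinforces when one requires them to hold for the same rational triple.

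The main difficulty is finding explicit $(a,b,c)$ that realize this joint obstruction. At each individual place, the condition $\chi=1$ can always be repaired by adjusting at the other place, so the obstruction must be of genuine genus-theoretic character---exactly the type of coupling that the circle method argument in Theorem~\ref{Linear} was designed to detect once one is allowed to consider finite $S$. Once $G$ is fixed, however, the search for suitable $(a,b,c)$ reduces to a short finite verification modulo $p_1 p_2$, which is the technical substance of Proposition~\ref{Doubles}.
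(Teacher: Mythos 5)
There is a genuine gap: you never produce the example. The entire content of Proposition~\ref{Doubles} is an explicit triple $(G,a,b,c)$ together with the verification that \eqref{abc} is solvable in each $G_{\{v\}}$ but not in $G$; your proposal defers exactly this (``the search for suitable $(a,b,c)$ reduces to a short finite verification'') and so proves nothing. Worse, the concrete candidates you name cannot be completed for \emph{any} choice of $a,b,c$. If $G$ is the kernel of a quadratic character --- whether the valuation parity $\chi(x)=(-1)^{v_{p_1}(x)+v_{p_2}(x)}$ or a product of Legendre-type characters --- then, as you note, $G_{\{v\}}=\Q_v^\times$ at every finite place, so the unit parts of local solutions are completely unconstrained. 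This freedom kills the hoped-for obstruction: over $\Q_{p_1}$ one can prescribe which two of the three terms $ax,by,cz$ have minimal valuation and give the third any larger valuation, so the set of triples $\bigl(\chi_{p_1}(x),\chi_{p_1}(y),\chi_{p_1}(z)\bigr)$ (respectively, of valuation parities) realized by $\Q_{p_1}$-solutions is all, or almost all, of $\{\pm1\}^3$, and likewise at $p_2$; two such sets always contain a common element, so the coupled condition defining $G_S$ can always be met, and then Theorem~\ref{Linear} even returns a solution in $G$ itself. In short, an index-two coupling between two places is never an obstruction, and neither is a pure valuation coupling without congruence conditions on units, since one can use different dominant pairs of terms at the two primes.

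The paper's construction shows what is actually needed and why your sketch misses it. There $G=\{3^m5^nx:\ m\equiv n\ (\mathrm{mod}\ 4),\ x\equiv 1\ (\mathrm{mod}\ 15)\}$ and the equation is $63x+30y+25z=0$. Two features work in tandem: the congruence $x\equiv 1\pmod{15}$ on unit parts \emph{rigidifies} the local solutions, forcing every solution in $G_{\{3\}}$ to satisfy $v_3(x)=v_3(y)-1=v_3(z)-2$ and every solution in $G_{\{5\}}$ to satisfy $v_5(x)=v_5(y)+1=v_5(z)+2$ (a leading-digit argument mod $3$ and mod $5$); and the coupling $v_3\equiv v_5\pmod 4$ has order four, so these two rigid patterns clash, giving $2\equiv 0\pmod 4$. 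Note that a mod-$2$ coupling would \emph{not} produce a contradiction even with these coefficients, and without the unit congruences no valuation coupling of any order obstructs. So your proposal is not merely incomplete; its guiding choices (index two, $G_v=\Q_v^\times$ everywhere, no rigidifying congruence on units) point away from any workable example.
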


\begin{proof}
We define
$$G := \{3^m 5^n x\mid m,n\in \Z,\ m\equiv n \pmod 4,\ x \in \Q^\times \cap \Z_3\cap \Z_5,\ x\equiv 1\pmod{15}\}.$$
Thus $G$ is of index $4\cdot \phi(15) = 32$ in $\Q^\times$.  It is
dense in $\Q_v^\times$ for $v\not\in\{3,5\}$ and for $v=p\in \{3,5\}$ its closure in $\Q_v^\times$ is 
$$G_{\{v\}} = p^{\Z}\{x\in \Z_p^\times\mid x\equiv 1\pmod p\}.$$
However, $G_{\{3,5\}}$ is not the product $G_{\{3\}}\times G_{\{5\}}$; rather, it is
$$\{(x_3,x_5)\in G_{\{3\}}\times G_{\{5\}}\mid v_3(x_3)\equiv v_5(x_5)\pmod 4\}.$$

Now, the equation
$$63x+30y+25z=0$$
has solutions in $G_{\{3\}}$ (for instance $(-5,3,9)$), but all such solutions satisfy
$$v_3(x) = v_3(y)-1 = v_3(z)-2.$$
It also has solutions in $G_{\{5\}}$ (for instance $(25, -45, -9)$), but all such solutions satisfy
$$v_5(x)=v_5(y)+1=v_5(z)+2.$$
Therefore, there are no solutions in $G_{\{3,5\}}$ and, a fortiori, no solutions in $G$.
\end{proof}

\section{Points on Diagonal Curves}
This section gives a proof of Theorem~\ref{Hasse-Minkowski}.  
It is easy to see that $G_K$ finitely generated implies $K^\times/(K^\times)^n$ finite (see, e.g., \cite{IL0}).
We begin by proving Theorem~\ref{Infinity-Zero}.

\begin{proof}
Suppose $ax^n+by^n+cz^n=0$ has a non-trivial solution $(\alpha,\beta,\gamma)\in K$.
Replacing $a,b,c$ by $a' := a\alpha^n,b' := b\beta^n,c' := c\gamma^n$
respectively, it suffices to prove that the projective curve $X': a'x^n+b'y^n+c'z^n=0$
has infinitely many points in $K$ such that $x\neq 0$, $y\neq 0$, and $z\neq 0$.
Since there are only finitely many points of $X'$ for which any of the coordinates is zero, it suffices to prove $X'(K)$ is
infinite.  The advantage of $X'$ over $X$ is that $a'+b'+c'=0$.  
Let $E\subset K$ be a number field containing $a',b',c'$.  As $E$ is infinite, we can find pairwise distinct
$p,q,r\in E^\times$ such that $a'p+b'q+c'r=0$ and an infinite sequence $h_1,h_2,\ldots\in E$ such that
all finite linear combinations of the $h_i$ with coefficients in $\{p,q,r\}$ are distinct from one another.
For each positive integer $k$, the map $f_k\colon \{p,q,r\}^k\to E$ defined by
$$f_k(x_1,\ldots,x_k) = h_1 x_1+\cdots+h_k x_k$$
is injective and takes only non-zero values.

Let $H := (K^\times)^n\cap E^\times$.
Let $m$ denote the index of $H$ in $E^\times$, which is finite.
For every positive integer $k$ the coset decomposition of $E^\times$ induces via $f_k$ a partition of $\{p,q,r\}^n$
into $m$ subsets.  By the Hales-Jewett theorem, if $k$ is sufficiently large, there exist $k$ functions $g_1,\ldots,g_k\colon \{1,2,3\}\to \{p,q,r\}$ such that for each $i$, either $g_i$ is constant or 
$$(g_i(1),g_i(2),g_i(3)) = (p,q,r),$$
and the three terms 
$$f_k(g_1(j),\ldots,g_k(j)),\ j=1,2,3,$$ 
lie in the same part of the partition.
If $I\subset \{1,\ldots,k\}$ denotes the set of indices $i$ for which $g_i$ is constant, we set
$$A = \sum_{i\in I} g_i(1)h_i,\ B = \sum_{i\in \{1,\ldots,n\}\setminus I} h_i,$$
and then $A+Bp, A+Bq, A+Br$ all belong to the same part of the partition, i.e., to the same coset of $H$.
If $C$ belongs to the inverse coset, then
$$(C(A+Bp),C(A+Bq),C(A+br))\in (E^\times)^n\times(E^\times)^n\times(E^\times)^n.$$
Thus,
$$((C(A+Bp))^{1/n},(C(A+Bq))^{1/n},(C(A+br))^{1/n})$$
lies on $X'(E)\subset X'(K)$.

\end{proof}

Now we prove Theorem~\ref{Hasse-Minkowski}.

\begin{proof}
By Theorem~\ref{Infinity-Zero} it suffices to prove that (1)$\Leftrightarrow$(2).  
For $\Q_v$ any completion of $\Q$ (i.e., $\R$ or $\Q_p$ for some $p$), we fix an algebraic closure of $\Qb_v$.  The algebraic closure $\Q^{\mathrm{cl},v}$ of $\Q$ in $\Qb_v$ is (non-canonically) isomorphic to $\Qb$.  Fixing an isomorphism $i_v\colon \Qb\to \Q^{\mathrm{cl},v}$, the restriction map defines an injective homomorphism
$G_{\Q_v}\to \Gal(\Q^{\mathrm{cl},v}/\Q)$ and via $i_v$ we obtain an injection $j_v\colon G_{\Q_v}\to G_{\Q}$.  As a topological group, $G_{\Q_v}$ is finitely generated; this is trivial if $v$ is archimedean and well-known (see, e.g., \cite{D,J,JW,W}) in the non-archimedean case.  
The invariant field $K_v$ of $\Qb$ by $j_v(G_{\Q_v})$ is isomorphic via $i_v$ to a subfield of $\Q_v$,
so (2) implies that $X(K_v)$, and therefore $X(\Q_v)$, is non-empty.

For the implication (1)$\Rightarrow$(2), we 
define $G = \Q^\times\cap (K^\times)^n$, so $G$ is of finite index in $\Q^\times$.
We apply Theorem~\ref{Linear} to $G$.  
In particular, $G\supset (\Q^\times)^n$, so by weak approximation,
for any finite set $S$ of places $v$, the closure $G_S$ of $G$ in $\Q_S^\times$ contains
$$\prod_{v\in S}(\Q_v^\times)^n.$$
In particular, if $X(\Q_v)$ has a point $(x_v:y_v:z_v)$ for each $v$, then $au+bv+cw=0$ has a solution in $G_S$ for all $S$
and therefore in $\Q$ itself, namely $u_v=x_v^n,v_v=y_v^n, w_v=z_v^n$.
\end{proof}

\begin{cor}
If $X$ is a diagonal curve, then $X(K)$ is infinite for all $K\subset \Qb$ with $G_K$ finitely generated if and only if 
$X(\A_{\Q}) \neq \emptyset$, where $\A_{\Q}$ denotes the ring of adeles.
\end{cor}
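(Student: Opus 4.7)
The plan is to deduce the corollary directly from Theorem~\ref{Hasse-Minkowski} by reformulating the adelic condition $X(\A_\Q) \neq \emptyset$ as a family of local conditions. The corollary is essentially a repackaging: all the real work has already been carried out in Theorem~\ref{Hasse-Minkowski}.

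First I would observe that the diagonal curve $X: ax^n + by^n + cz^n = 0$ is a closed subscheme of $\PP^2_\Q$ and hence proper over $\Q$. For any proper $\Q$-scheme, the valuative criterion of properness gives $X(\Q_p) = X(\Z_p)$ for every prime $p$, so that the set of adelic points is simply the full, unrestricted product
$$X(\A_\Q) = X(\R) \times \prod_p X(\Q_p).$$
Consequently $X(\A_\Q) \neq \emptyset$ is equivalent to $X(\Q_v) \neq \emptyset$ for every place $v$ of $\Q$, which is exactly condition (1) in the statement of Theorem~\ref{Hasse-Minkowski}.

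Combining this equivalence with the equivalence (1)$\Leftrightarrow$(3) established in Theorem~\ref{Hasse-Minkowski} yields the corollary, since (3) is precisely the assertion that $X(K)$ is infinite for every $K \subset \Qb$ with $G_K$ finitely generated. The only point requiring verification is the adelic reformulation above; because $X$ is projective this is entirely standard, and there is no serious obstacle beyond the theorem already proved.
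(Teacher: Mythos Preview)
Your argument is correct. Both your proof and the paper's reduce the corollary to showing that $X(\A_\Q)\neq\emptyset$ is equivalent to $X(\Q_v)\neq\emptyset$ for every place $v$, after which Theorem~\ref{Hasse-Minkowski} applies directly. You establish this equivalence by invoking properness: since $X$ is projective, the valuative criterion yields $X(\Q_p)=\mathcal{X}(\Z_p)$ for any proper integral model $\mathcal{X}$, so the restricted product defining $X(\A_\Q)$ collapses to the full product $\prod_v X(\Q_v)$. The paper instead argues concretely: it observes that for all sufficiently large primes $p$ the reduction of $X$ modulo $p$ is a smooth curve of genus $\frac{(n-1)(n-2)}{2}$, which by the Weil bound has an $\F_p$-point, and Hensel's lemma lifts this to a $\Z_p$-point; hence the integrality constraint in the adelic product is satisfied at almost all primes unconditionally. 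Your route is shorter and more conceptual; the paper's route is self-contained and avoids appealing to scheme-theoretic machinery.
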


\begin{proof}
The only additional point to check is that for any $a,b,c\in \Q^\times$, there exists a finite set $S$ of places of $\Q$, including $\infty$, such that
$X$ has a point over $\Z_p$ for all $p\not\in S$.  If $p$ is sufficiently large, $a$, $b$, and $c$ are $p$-adic units, so $X$ has good reduction (mod $p$),
and the reduction is a curve of genus $\frac{(n-1)(n-2)}2$.  If $p > (n-1)^2(n-2)^2$, the Weil bound implies that $X$ has at least one points over $\F_p$,
and Hensel's lemma implies that any such point lifts to a $\Z_p$-point.

\end{proof}

\begin{quest}
Is it always true that  for $X$  a non-singular curve over a number field $E$, there exists an $\A_E$-point on $X$ if and only if
for all $K\subset \Qb$ with $G_K$ finitely generated, $X(K)$ is infinite?
\end{quest}

The circle method offers the hope of giving an affirmative answer to this question for some non-diagonal curves.  We hope to treat this matter in a subsequent paper.

\end{document}